\newcommand{\strutstretchdef}{\newcommand{\strutstretch}{\vphantom{\raisebox{1pt}{$\big($}\raisebox{-1pt}{$\big($}}}}
\theoremstyle{plain}
\newtheorem{theorem}{Theorem}[section]
\newtheorem{lemma}[theorem]{Lemma}
\newtheorem{corollary}[theorem]{Corollary}
\theoremstyle{definition}
\newtheorem{definition}[theorem]{Definition}
\theoremstyle{remark}
\newtheorem{remark}[theorem]{Remark}
\numberwithin{equation}{section}
\newlength{\struh}
\newlength{\textminustop}
\newcommand{\ncom}{\newcommand}
\ncom{\bq}{\begin{equation}}
\ncom{\eq}{\end{equation}}
\ncom{\beqn}{\begin{eqnarray*}}
\ncom{\eeqn}{\end{eqnarray*}}
\ncom{\beq}{\begin{eqnarray}}
\ncom{\eeq}{\end{eqnarray}}
\ncom{\nno}{\nonumber}
\ncom{\rar}{\rightarrow}
\ncom{\Rar}{\Rightarrow}
\ncom{\noin}{\noindent}
\ncom{\bc}{\begin{centre}}
\ncom{\ec}{\end{centre}}
\ncom{\sz}{\scriptsize}
\ncom{\rf}{\ref}
\ncom{\sgm}{\sigma}
\ncom{\Sgm}{\Sigma}
\ncom{\dt}{\delta}
\ncom{\Dt}{Delta}
\ncom{\lmd}{\lambda}
\ncom{\Lmd}{\Lambda}
\ncom{\eps}{\epsilon}
\ncom{\pcc}{\stackrel{P}{>}}
\ncom{\dist}{{\rm\,dist}}
\ncom{\sspan}{{\rm\,span}}
\ncom{\im}{{\rm Im\,}}
\ncom{\sgn}{{\rm sgn\,}}
\ncom{\ba}{\begin{array}}
\ncom{\ea}{\end{array}}
\ncom{\eop}{\hfill{{\rule{2.5mm}{2.5mm}}}}
\ncom{\eoe}{\hfill{{\rule{1.5mm}{1.5mm}}}}
\ncom{\eof}{\hfill{{\rule{1.5mm}{1.5mm}}}}
\ncom{\hone}{\mbox{\hspace{1em}}}
\ncom{\htwo}{\mbox{\hspace{2em}}}
\ncom{\hthree}{\mbox{\hspace{3em}}}
\ncom{\hfour}{\mbox{\hspace{4em}}}
\ncom{\hsev}{\mbox{\hspace{7em}}}
\ncom{\vone}{\vskip 2ex}
\ncom{\vtwo}{\vskip 4ex}
\ncom{\vonee}{\vskip 1.5ex}
\ncom{\vthree}{\vskip 6ex}
\ncom{\vfour}{\vspace*{8ex}}
\ncom{\norm}{\|\;\;\|}
\ncom{\integ}[4]{\int_{#1}^{#2}\,{#3}\,d{#4}}
\ncom{\inp}[2]{\langle{#1},\,{#2} \rangle}
\ncom{\Inp}[2]{\Langle{#1},\,{#2} \Langle}
\ncom{\vspan}[1]{{{\rm\,span}\#1 \}}}
\ncom{\dm}[1]{\displaystyle {#1}}
\begin{document}
\title[Weyl's Theorem  in Two Variables]
{Weyl's Theorem for Pairs of Commuting Hyponormal Operators}
\author[S. Chavan and R. Curto]{Sameer Chavan and Ra$\acute{\mbox{u}}$l Curto}
\address{Indian Institute of Technology Kanpur\\
Kanpur- 208016, India}
\email{chavan@iitk.ac.in}
\address{University of Iowa\\
Iowa City, Iowa 52242, USA}
\email{raul-curto@uiowa.edu}
\thanks{The second named author was partially supported by NSF Grant DMS-1302666.} \ 

\keywords{hyponormality, Taylor spectrum, Weyl spectrum}

\subjclass[2010]{Primary 47A13; Secondary
47B20}

\begin{abstract}
Let $\mathbf{T}$ be a pair of commuting hyponormal operators satisfying the so-called quasitriangular property
$$
\textrm{dim} \; \textrm{ker} \; (\mathbf{T}-\boldsymbol\lambda) \ge \textrm{dim} \; \textrm{ker} \; (\mathbf{T} - {\boldsymbol\lambda})^*),
$$
for every $\boldsymbol\lambda$ in the Taylor spectrum $\sigma(\mathbf{T})$ of $\mathbf{T}$.
We prove that the Weyl spectrum of $\mathbf{T}$, $\omega(\mathbf{T})$, satisfies the identity
$$
\omega(\mathbf{T})=\sigma(\mathbf{T}) \setminus \pi_{00}(\mathbf{T}),
$$
where $\pi_{00}(\mathbf{T})$ denotes the set of isolated eigenvalues of finite multiplicity.

Our method of proof relies on a (strictly $2$-variable) fact about the topological boundary of the Taylor spectrum; as a result, our proof does not hold for $d$-tuples of commuting hyponormal operators with $d>2$. 
\end{abstract}

\maketitle

\section{Weyl's Theorem in Two Variables}

The aim of this note is to present an analog of Weyl's Theorem for commuting pairs of hyponormal operators.
For the definitions and basic theory of various spectra
including the Taylor and Harte spectra, the reader is referred to \cite{Cu} (see also \cite{Tay1}). \
For a commuting $d$-tuple $\mathbf{T}$, we reserve the symbols $\sigma(\mathbf{T})$, $\sigma_H(\mathbf{T})$, $\sigma_p(\mathbf{T})$ and $\sigma_e(\mathbf{T})$ for the Taylor spectrum,  Harte spectrum, point spectrum and Taylor essential spectrum of $\mathbf{T}$, respectively. \
By a {\it commuting $d$-tuple} $\mathbf{T}$, we understand here and throughout this note a $d$-tuple of commuting bounded linear operators $T_1, \cdots, T_d$ on a complex, separable Hilbert space $\mathcal H$. \ 
For $d=1$, L. Coburn proved in \cite{C} Weyl's Theorem for hyponormal operators; this led to a number of extensions to classes of operators containing the subnormal operators. \ For $d>1$, there are various notions of Weyl spectrum (\cite{Cho-1}, \cite{H}, \cite{P}, \cite{L}). \ We recall in particular two notions of Weyl spectrum with which we will be primarily concerned. \
The {\it joint Weyl spectrum} $\omega(\mathbf{T})$ of a commuting $d$-tuple $\mathbf{T}$ is defined as
$$
\omega(\mathbf{T}):=\cap \{\sigma(\mathbf{T}+\mathbf{K}) : \mathbf{K} \in \mathcal K^{(d)}(\mathcal H)~\mbox{~such~that~} \mathbf{T}+\mathbf{K}~\mbox{~is~commuting}\},
$$
where $\mathcal \mathbf{K}^{(d)}(\mathcal H)$ denotes the collection of $d$-tuples of compact operators on $\mathcal H$. \ The {\it Taylor Weyl spectrum} of $\mathbf{T}$ is defined as
$$
\sigma_W(\mathbf{T}):=\sigma_e(\mathbf{T}) \cup \{\boldsymbol\lambda \in \sigma(\mathbf{T}) \setminus \sigma_e(\mathbf{T}) : \mbox{ind}(\mathbf{T}-\boldsymbol\lambda) \neq 0\},
$$
where $\boldsymbol\lambda:=(\lambda_1,\cdots,\lambda_d)$ and the {\it Fredholm index} $\mbox{ind}(\mathbf{S})$ of a $d$-tuple $\mathbf{S}$ of commuting operators is the Euler characteristic of the Koszul complex $K(\mathbf{S}, \mathcal{H})$ for $\mathbf{S}$, given by
\beq
\label{index} \mbox{ind}(\mathbf{S}):= \sum_{k=0}^d (-1)^k \dim H^k(\mathbf{S}),
\eeq
with $H^k(\mathbf{S})$ denoting the $k$-th cohomology group in $K(\mathbf{S},\mathcal{H})$; observe that $H^0(\mathbf{S}) \cong \ker \; Q_{\mathbf{S}}(I)$ and $H^d(\mathbf{S}) \cong \ker \; Q_{\mathbf{S^*}}(I)$. \ By {\it Weyl spectrum} we understand {\it any} of the joint Weyl and Taylor Weyl spectra. \  

For future reference, we record the following elementary fact from \cite{H-K} about the relationship between the aforementioned two notions of Weyl spectra. \ For the sake of completeness, we provide an alternative verification of this result.

\begin{lemma} (\cite[Lemma 2]{H-K}) \label{inclu}
The joint Weyl spectrum and Taylor Weyl spectrum of a commuting $d$-tuple $\mathbf{T}$ satisfies the relation $\sigma_W(\mathbf{T}) \subseteq  \omega(\mathbf{T})$.
\end{lemma}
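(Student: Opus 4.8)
The plan is to prove the inclusion by contraposition: I will show that if $\boldsymbol\lambda \notin \omega(\mathbf{T})$, then $\boldsymbol\lambda \notin \sigma_W(\mathbf{T})$. So suppose $\boldsymbol\lambda \notin \omega(\mathbf{T})$. By the definition of the joint Weyl spectrum, there exists a $d$-tuple $\mathbf{K} \in \mathcal K^{(d)}(\mathcal H)$ of compact operators such that $\mathbf{T}+\mathbf{K}$ is commuting and $\boldsymbol\lambda \notin \sigma(\mathbf{T}+\mathbf{K})$; that is, the tuple $\mathbf{T}+\mathbf{K}-\boldsymbol\lambda$ is Taylor invertible. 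Since Taylor invertibility means that the associated Koszul complex $K(\mathbf{T}+\mathbf{K}-\boldsymbol\lambda,\mathcal H)$ is exact, all of its cohomology groups $H^k$ vanish; in particular the complex is Fredholm and, by \eqref{index}, $\mbox{ind}(\mathbf{T}+\mathbf{K}-\boldsymbol\lambda)=0$.

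The argument now rests on two stability properties of multivariable Fredholm theory under compact perturbations. First, the Taylor essential spectrum is invariant under compact perturbations: since $\mathbf{T}+\mathbf{K}-\boldsymbol\lambda$ is Taylor Fredholm (indeed Taylor invertible) and $\mathbf{K}$ is compact, the tuple $\mathbf{T}-\boldsymbol\lambda$ is again Taylor Fredholm, so $\boldsymbol\lambda \notin \sigma_e(\mathbf{T})$. Second, the Fredholm index defined in \eqref{index} is itself invariant under compact perturbations, whence $\mbox{ind}(\mathbf{T}-\boldsymbol\lambda)=\mbox{ind}(\mathbf{T}+\mathbf{K}-\boldsymbol\lambda)=0$. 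Combining the two conclusions, $\boldsymbol\lambda$ fails both defining conditions of the Taylor Weyl spectrum: it is not in $\sigma_e(\mathbf{T})$, and, being Taylor Fredholm with vanishing index, it does not belong to $\{\boldsymbol\mu \in \sigma(\mathbf{T})\setminus\sigma_e(\mathbf{T}) : \mbox{ind}(\mathbf{T}-\boldsymbol\mu)\neq 0\}$. Hence $\boldsymbol\lambda \notin \sigma_W(\mathbf{T})$, which establishes the desired inclusion $\sigma_W(\mathbf{T}) \subseteq \omega(\mathbf{T})$.

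The main obstacle is the justification of the two stability statements, and especially the invariance of the index under compact perturbation, which in the single-variable case is classical but in the Taylor (Koszul-complex) setting requires the perturbation theory of Fredholm tuples. Rather than reprove this, I would invoke the standard machinery for the Taylor essential spectrum — the description of $\sigma_e(\mathbf{T})$ via the Calkin algebra together with the stability of the Euler characteristic of the Koszul complex under compact (and small) perturbations, as developed in \cite{Cu} and \cite{Tay1}. A secondary, purely bookkeeping point to record is that exactness of the Koszul complex forces every $H^k$ to vanish and hence makes the alternating sum in \eqref{index} equal to zero; this is immediate from the definition, but stating it explicitly keeps the computation $\mbox{ind}(\mathbf{T}+\mathbf{K}-\boldsymbol\lambda)=0$ transparent and isolates exactly where Taylor invertibility is used.
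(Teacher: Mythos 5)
Your proposal is correct and follows essentially the same route as the paper: pass to a compact commuting perturbation that is Taylor invertible at $\boldsymbol\lambda$, note that invertibility gives Fredholmness with index $0$, and then transfer both Fredholmness and the vanishing of the index back to $\mathbf{T}-\boldsymbol\lambda$ by compact-perturbation stability. The only difference is bookkeeping of references: the paper cites the multivariable Atkinson Theorem \cite[Theorem 2]{Cu-0} as the single result packaging exactly the two stability facts you invoke.
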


\begin{proof}
Suppose that there exists a $d$-tuple $\mathbf{K}$ of compact operators such that $\mathbf{T}+\mathbf{K}$ is a commuting $d$-tuple and $\boldsymbol\lambda \notin \sigma(\mathbf{T}+\mathbf{K})$. \ Thus, $\mathbf{T}-\boldsymbol\lambda + \mathbf{K}$ is invertible, and hence Fredholm with Fredholm index equal to $0$. \ By the multivariable Atkinson Theorem \cite[Theorem 2]{Cu-0}, $\mathbf{T}-\boldsymbol\lambda$ is Fredholm with index equal to $0$, that is, $\boldsymbol\lambda \notin \sigma_W(\mathbf{T}).$
\end{proof}

\begin{remark} \ It is interesting to note that $\sigma_W(\mathbf{T}) = \omega(\mathbf{T})$ for all $\mathbf{T}$ if and only if $d=1$. \ This may be concluded from the discussion following \cite[Theorem]{R-2}, where it is shown that certain Fredholm $d$-tuples of index equal to $0$ cannot be perturbed by compact tuples to a Taylor invertible tuple (see also \cite{H-K}). \qed
\end{remark}

Before we present an analog of Weyl's Theorem \cite[Theorem 3.1]{C} for commuting hyponormal tuples,
recall that a bounded, linear operator $S$ on a Hilbert space $\mathcal H$ is {\it hyponormal} if its self-commutator $S^*S - SS^*$ is positive. \ Also, given a $d$-tuple $\mathbf{S} \equiv (S_1,\cdots,S_d)$ we let 
$$
Q_{\mathbf{S}}(X):=\sum_{i=1}^d S^*_iXS_i \; \; (\textrm{for } X \textrm{ a bounded operator on } \mathcal{H}). 
$$

\begin{definition}
(cf. \cite{H-K}). \ A $d$-tuple $\mathbf{T}$ has the {\it quasitriangular property} if $\mathbf{T}$ satisfies 
\beq  \label{qt} 
\dim \ker Q_{\mathbf{T}_{\boldsymbol\lambda}}(I) \geq  \dim \ker Q_{\mathbf{T}^*_{\boldsymbol\lambda}}(I),
\eeq
for every $\boldsymbol\lambda \in \sigma(\mathbf{T})$, where $\mathbf{T}_{\boldsymbol\lambda}:=\mathbf{T}-\boldsymbol\lambda$.
\end{definition}
We are now ready to state our main result.
 
\begin{theorem} \label{Weyl} Let $\mathbf{T}$ be a commuting $d$-tuple of hyponormal operators on a Hilbert space $\mathcal H$ and let
$\pi_{00}(\mathbf{T})$ denote the set of isolated eigenvalues of $\mathbf{T}$ of finite multiplicity. \ The following statements are true. \newline
(i) \ $\omega(\mathbf{T}) \subseteq \sigma(\mathbf{T}) \setminus \pi_{00}(\mathbf{T})$. \ \newline
(ii) \ Assume $d=2$ and that $\mathbf{T}$ satisfies (\ref{qt}). \ Then $\sigma(\mathbf{T}) \setminus \pi_{00}(\mathbf{T}) \subseteq \sigma_W(\mathbf{T})$.
\end{theorem}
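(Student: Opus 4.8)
The plan is to treat the two inclusions separately, since only (ii) needs $d=2$ and the quasitriangular hypothesis. For (i) I would prove the equivalent statement $\pi_{00}(\mathbf{T})\cap\omega(\mathbf{T})=\varnothing$. Fix $\boldsymbol\lambda\in\pi_{00}(\mathbf{T})$ and let $E$ be the Riesz idempotent supplied by Taylor's analytic functional calculus for the clopen subset $\{\boldsymbol\lambda\}$ of $\sigma(\mathbf{T})$; then $\mathcal H_0:=\operatorname{ran} E$ is invariant under each $T_i$, and the projection property of the Taylor spectrum gives $\sigma(T_i|_{\mathcal H_0})=\{\lambda_i\}$. Writing each $T_i$ as an upper-triangular operator matrix with respect to $\mathcal H=\mathcal H_0\oplus\mathcal H_0^\perp$ and compressing the (positive) self-commutator to $\mathcal H_0$ shows that $T_i|_{\mathcal H_0}$ is again hyponormal; being hyponormal with one-point spectrum it is normaloid, hence $T_i|_{\mathcal H_0}=\lambda_i$, and feeding this back into the self-commutator inequality forces the off-diagonal block to vanish. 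Thus $\mathcal H_0$ reduces $\mathbf{T}$, $\mathbf{T}|_{\mathcal H_0}=\boldsymbol\lambda$, and $\mathcal H_0$ coincides with the finite-dimensional joint eigenspace $N:=\bigcap_i\ker(T_i-\lambda_i)$. A finite-rank scalar perturbation supported on $\mathcal H_0$ (replacing $\boldsymbol\lambda$ by $\boldsymbol\lambda+\boldsymbol\epsilon$) then deletes $\boldsymbol\lambda$ from the spectrum while keeping the tuple commuting, so $\boldsymbol\lambda\notin\omega(\mathbf{T})$. This argument uses no restriction on $d$.

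For (ii) I would argue contrapositively: assuming $\boldsymbol\lambda\in\sigma(\mathbf{T})\setminus\sigma_W(\mathbf{T})$, so that $\mathbf{T}-\boldsymbol\lambda$ is Fredholm with $\operatorname{ind}(\mathbf{T}-\boldsymbol\lambda)=0$, I must produce $\boldsymbol\lambda\in\pi_{00}(\mathbf{T})$. First comes the eigenvalue step. Using $H^0\cong\ker Q_{\mathbf{T}_{\boldsymbol\lambda}}(I)$ and $H^2\cong\ker Q_{\mathbf{T}^*_{\boldsymbol\lambda}}(I)$, the hypothesis (\ref{qt}) reads $\dim H^0\ge\dim H^2$, while $\operatorname{ind}(\mathbf{T}-\boldsymbol\lambda)=\dim H^0-\dim H^1+\dim H^2=0$. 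If $\dim H^0=0$, quasitriangularity forces $\dim H^2=0$ and then $\dim H^1=0$, making $\mathbf{T}-\boldsymbol\lambda$ Taylor-invertible and contradicting $\boldsymbol\lambda\in\sigma(\mathbf{T})$. Hence $0<\dim H^0<\infty$ and $\boldsymbol\lambda$ is a joint eigenvalue of finite multiplicity.

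It then remains to show $\boldsymbol\lambda$ is isolated. Since each $T_i$ is hyponormal, every joint eigenvector is a joint eigenvector of the adjoint tuple, so $N$ reduces $\mathbf{T}$ and $\mathbf{T}=(\boldsymbol\lambda I_N)\oplus\mathbf{T}'$ with $\mathbf{T}'$ a commuting hyponormal pair on $N^\perp$. I would now run the cohomology bookkeeping across this orthogonal direct sum. Additivity of the Koszul cohomology gives $H^0(\mathbf{T}-\boldsymbol\lambda)=N\oplus H^0(\mathbf{T}'-\boldsymbol\lambda)$ and $H^2(\mathbf{T}-\boldsymbol\lambda)=N\oplus H^2(\mathbf{T}'-\boldsymbol\lambda)$; since $H^0(\mathbf{T}-\boldsymbol\lambda)=N$, the first identity yields $H^0(\mathbf{T}'-\boldsymbol\lambda)=0$. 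As $\dim H^0(\mathbf{T}-\boldsymbol\lambda)=\dim N$ while $\dim H^2(\mathbf{T}-\boldsymbol\lambda)=\dim N+\dim H^2(\mathbf{T}'-\boldsymbol\lambda)$, the inequality (\ref{qt}) for $\mathbf{T}$ forces $H^2(\mathbf{T}'-\boldsymbol\lambda)=0$. Because $\operatorname{ind}$ is additive and the zero pair on $N$ has Euler characteristic $\dim N(1-2+1)=0$, we get $\operatorname{ind}(\mathbf{T}'-\boldsymbol\lambda)=0$, whence $\dim H^1(\mathbf{T}'-\boldsymbol\lambda)=0$ as well. All three cohomology spaces of $\mathbf{T}'-\boldsymbol\lambda$ vanish, so $\boldsymbol\lambda\notin\sigma(\mathbf{T}')$; since $\sigma(\mathbf{T})=\{\boldsymbol\lambda\}\cup\sigma(\mathbf{T}')$ and $\sigma(\mathbf{T}')$ is closed, $\boldsymbol\lambda$ is isolated. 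Together with the eigenvalue step this gives $\boldsymbol\lambda\in\pi_{00}(\mathbf{T})$.

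The step I expect to be delicate is exactly the passage from ``Fredholm of index zero in the spectrum'' to ``isolated,'' and this is where the restriction $d=2$ is indispensable. The three-term Euler identity $\dim H^0-\dim H^1+\dim H^2=0$ is what lets quasitriangularity collapse the entire complex once $H^0$ (equivalently $H^2$) is controlled; for $d\ge 3$ the intermediate groups $H^1,\dots,H^{d-1}$ are no longer pinned down by $\dim H^0\ge\dim H^d$ and vanishing index, so neither the eigenvalue step nor the vanishing of $H^1(\mathbf{T}'-\boldsymbol\lambda)$ survives. On the reducing complement $N^\perp$ this is precisely the strictly two-variable rigidity highlighted in the abstract: an index-zero spectral point, once its joint kernel is split off, can no longer lie in $\sigma(\mathbf{T}')$ at all, a phenomenon tied to the topological boundary of the Taylor spectrum of a commuting pair and one that fails in higher dimensions.
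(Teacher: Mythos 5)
Part (i) of your proposal follows the paper's own strategy (Shilov/Riesz idempotent, hyponormality forcing $T_i|_{\mathcal H_0}=\lambda_i I_{\mathcal H_0}$ and $\mathcal H_0$ reducing, then a finite-rank perturbation), but it has a genuine gap at the decisive step. After writing $\mathbf{T}=\boldsymbol\lambda I_{\mathcal H_0}\oplus\mathbf{T}'$ with $\mathbf{T}':=\mathbf{T}|_{\mathcal H_0^\perp}$, you assert that replacing $\boldsymbol\lambda$ by $\boldsymbol\lambda+\boldsymbol\epsilon$ on $\mathcal H_0$ ``deletes $\boldsymbol\lambda$ from the spectrum.'' The perturbed tuple has Taylor spectrum $\{\boldsymbol\lambda+\boldsymbol\epsilon\}\cup\sigma(\mathbf{T}')$, so this assertion is exactly the claim $\boldsymbol\lambda\notin\sigma(\mathbf{T}')$, which you never prove. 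It is not automatic: the Shilov decomposition guarantees $\boldsymbol\lambda\notin\sigma(\mathbf{T}|_{\ker E})$, but your complement is $\mathcal H_0^\perp$, not $\ker E$ (the idempotent $E$ need not be self-adjoint a priori), and a point of the Taylor spectrum need not be a joint eigenvalue, so it cannot be excluded from $\sigma(\mathbf{T}')$ merely because the joint eigenspace has been split off. This is precisely the point to which the paper devotes a full paragraph: if $\boldsymbol\lambda\in\sigma(\mathbf{T}')$, it would be an isolated point of $\sigma(\mathbf{T}')$, hence, re-running the Shilov-plus-hyponormality argument on the commuting hyponormal tuple $\mathbf{T}'$, a joint eigenvalue of $\mathbf{T}'$; this yields $0\neq y\in\mathcal H_0^\perp\cap N$, contradicting $N\subseteq\mathcal H_0$. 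Note that this repair also uses the inclusion $N\subseteq\mathcal H_0$, i.e., the half of your claim ``$\mathcal H_0=N$'' that you state without proof (only $\mathcal H_0\subseteq N$ is immediate; the reverse inclusion holds because an eigenvector with nonzero component in $\ker E$ would force $\boldsymbol\lambda\in\sigma(\mathbf{T}|_{\ker E})=\sigma(\mathbf{T})\setminus\{\boldsymbol\lambda\}$). Alternatively, once $\mathcal H_0$ reduces $\mathbf{T}$, the orthogonal projection onto $\mathcal H_0^\perp$ restricts to a bounded bijection $\ker E\to\mathcal H_0^\perp$ intertwining $\mathbf{T}|_{\ker E}$ with $\mathbf{T}'$, so $\sigma(\mathbf{T}')=\sigma(\mathbf{T})\setminus\{\boldsymbol\lambda\}$. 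Either way, a real argument is required where you have only an assertion.

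Part (ii), by contrast, is correct, and its isolation step takes a genuinely different, leaner route than the paper's. Your eigenvalue step coincides with the paper's. For isolation, the paper also splits off $N$ (Lemma \ref{Lem}) and shows $\ker Q_{\mathbf{B}_{\boldsymbol\lambda}}(I)=\ker Q_{\mathbf{B}^*_{\boldsymbol\lambda}}(I)=\{0\}$ (its $\mathbf{B}$ is your $\mathbf{T}'$), but from this it extracts only Harte invertibility of $\mathbf{B}-\boldsymbol\lambda$, and must then invoke the Harte--Taylor comparison (Lemma \ref{Cor610}, the deep, strictly two-variable ingredient from \cite{Cu}) to pass from isolation in $\sigma_H(\mathbf{T})$ to isolation in $\sigma(\mathbf{T})$. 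You instead note that $H^0(\mathbf{T}'-\boldsymbol\lambda)=H^2(\mathbf{T}'-\boldsymbol\lambda)=0$ together with $\operatorname{ind}(\mathbf{T}'-\boldsymbol\lambda)=0$ forces $H^1(\mathbf{T}'-\boldsymbol\lambda)=0$, and vanishing of all three Koszul cohomology groups is, by definition, Taylor invertibility; hence $\boldsymbol\lambda\notin\sigma(\mathbf{T}')$ directly, and isolation follows from $\sigma(\mathbf{T})=\{\boldsymbol\lambda\}\cup\sigma(\mathbf{T}')$. The bookkeeping you use (additivity of Koszul cohomology over an orthogonal direct sum, Fredholmness of the summands with additive index, and finiteness of $\dim N$ to cancel in \eqref{qt}) is sound in the Fredholm setting, where the identifications $H^0\cong\ker Q_{\mathbf{S}}(I)$ and $H^2\cong\ker Q_{\mathbf{S}^*}(I)$ are valid. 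So your argument dispenses with Lemma \ref{Cor610} entirely, while the $d=2$ restriction survives exactly where you locate it: in the three-term Euler identity that pins down the middle cohomology.
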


\begin{remark}
Condition (\ref{qt}) is equivalent to the statement that the dimension of the cohomology group at the first stage of the Koszul complex for $\mathbf{T}-\boldsymbol\lambda$ is greater than or equal to the dimension of the cohomology group at the last stage of the Koszul complex for $\mathbf{T}-\boldsymbol\lambda$. \ This property is closely related to the notion of quasitriangular operator (see the discussion following \cite[Definition 3]{H-K}). \ Finally, note that \eqref{qt} is satisfied by any $d$-tuple $\mathbf{T}$ such that $\sigma_p(T_i^*)=\emptyset$ for some $i=1,\cdots,d$. \qed
\end{remark}

The following is immediate from Theorem \ref{Weyl} and Lemma \ref{inclu}.

\begin{corollary} \label{Weyl-0} Let $\mathbf{T}$ be a commuting pair of hyponormal operators on a Hilbert space $\mathcal H$ and let
$\pi_{00}(\mathbf{T})$ denotes the set of isolated eigenvalues of $\mathbf{T}$ of finite multiplicity.
If $\mathbf{T}$ satisfies the quasitriangular property \eqref{qt} then
\begin{equation} \label{Cor15}
\omega(\mathbf{T}) = \sigma(\mathbf{T}) \setminus \pi_{00}(\mathbf{T}) = \sigma_W(\mathbf{T}).
\end{equation}
\end{corollary}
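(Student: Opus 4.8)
Because the two containments in Theorem~\ref{Weyl} point in opposite directions, the plan is to read the Corollary as the composite of three inclusions and then concentrate on the one place where real work is needed. Granting Theorem~\ref{Weyl} and Lemma~\ref{inclu}, the chain $\omega(\mathbf T)\subseteq\sigma(\mathbf T)\setminus\pi_{00}(\mathbf T)\subseteq\sigma_W(\mathbf T)\subseteq\omega(\mathbf T)$ closes up (the first inclusion is Theorem~\ref{Weyl}(i), the second is Theorem~\ref{Weyl}(ii), valid since $d=2$ and \eqref{qt} holds, and the third is Lemma~\ref{inclu}), forcing equality throughout and giving \eqref{Cor15}. So I would spend essentially all the effort proving Theorem~\ref{Weyl} itself, whose two parts I sketch below.

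For part (i), I would first dispose of $\omega(\mathbf T)\subseteq\sigma(\mathbf T)$ by taking $\mathbf K=0$, and then show $\pi_{00}(\mathbf T)\cap\omega(\mathbf T)=\varnothing$. Fix an isolated $\boldsymbol\lambda\in\sigma(\mathbf T)$ that is a joint eigenvalue of finite multiplicity. Since $\boldsymbol\lambda$ is isolated, the Taylor functional calculus furnishes an idempotent $E$ commuting with $\mathbf T$ for which $\sigma(\mathbf T|_{\operatorname{ran}E})=\{\boldsymbol\lambda\}$ and $\sigma(\mathbf T|_{\ker E})=\sigma(\mathbf T)\setminus\{\boldsymbol\lambda\}$. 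The key structural input is that hyponormality makes $E$ \emph{self-adjoint}: this is the joint analogue of Stampfli's theorem, and once it is in hand $\operatorname{ran}E$ reduces $\mathbf T$, so $\mathbf T|_{\operatorname{ran}E}$ is a commuting hyponormal pair with one-point Taylor spectrum; the projection property then forces each $T_i|_{\operatorname{ran}E}-\lambda_i$ to be quasinilpotent and hyponormal, hence $0$. Thus $\operatorname{ran}E$ equals the finite-dimensional joint eigenspace $\mathcal M=\bigcap_i\ker(T_i-\lambda_i)$, on which $\mathbf T$ acts as $\boldsymbol\lambda I$. Writing $P$ for the orthogonal projection onto $\mathcal M$ and picking any $\boldsymbol\mu\neq\boldsymbol\lambda$, the finite-rank (hence compact) tuple $\mathbf K=((\mu_1-\lambda_1)P,(\mu_2-\lambda_2)P)$ keeps $\mathbf T+\mathbf K$ commuting and replaces $\{\boldsymbol\lambda\}$ by $\{\boldsymbol\mu\}$ in the spectrum; since $\boldsymbol\lambda\notin\sigma(\mathbf T|_{\ker E})$ this yields $\boldsymbol\lambda\notin\sigma(\mathbf T+\mathbf K)$, so $\boldsymbol\lambda\notin\omega(\mathbf T)$.

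For part (ii), fix $\boldsymbol\lambda\in\sigma(\mathbf T)\setminus\sigma_W(\mathbf T)$, so $\mathbf T-\boldsymbol\lambda$ is Fredholm (every cohomology $H^k(\mathbf T-\boldsymbol\lambda)$ is finite-dimensional) of index $0$; I must show $\boldsymbol\lambda\in\pi_{00}(\mathbf T)$. Finite multiplicity is automatic from $\dim H^0<\infty$. That $\boldsymbol\lambda$ is genuinely an eigenvalue uses all three hypotheses at once: if $H^0=0$, then \eqref{qt} (which says $\dim H^0\ge\dim H^2$ for $d=2$) forces $H^2=0$, whereupon $\operatorname{ind}(\mathbf T-\boldsymbol\lambda)=\dim H^0-\dim H^1+\dim H^2=0$ forces $H^1=0$ as well; but then the Koszul complex is exact and $\boldsymbol\lambda\notin\sigma(\mathbf T)$, a contradiction. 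Hence $H^0\neq0$.

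The isolation of $\boldsymbol\lambda$ is the crux, and the only step that both requires $d=2$ and constitutes the main obstacle. My mechanism has two ingredients. First, a separability constraint: for a commuting hyponormal pair the joint eigenspaces at distinct joint eigenvalues are orthogonal (distinct $\boldsymbol\mu,\boldsymbol\nu$ differ in some coordinate $j$, and eigenvectors of the hyponormal $T_j$ at $\mu_j\neq\nu_j$ are orthogonal), so the set of joint eigenvalues can contain no nonempty open subset of $\mathbb C^2$. Second, the eigenvalue argument above applies verbatim at every index-$0$ Fredholm point of $\sigma(\mathbf T)$; hence on a small ball $U$ about $\boldsymbol\lambda$ inside the Fredholm domain with index identically $0$, every point of $\sigma(\mathbf T)\cap U$ is a joint eigenvalue. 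This already rules out $\boldsymbol\lambda$ being interior to $\sigma(\mathbf T)$, since an interior point would make $\sigma(\mathbf T)\cap U$ an open set of joint eigenvalues. To exclude the remaining possibility, that $\boldsymbol\lambda$ is a \emph{non-isolated boundary} point, I would invoke the strictly two-variable fact about the topological boundary of the Taylor spectrum: a Fredholm index-$0$ point on $\partial\sigma(\mathbf T)$ must be isolated in $\sigma(\mathbf T)$. Verifying and correctly applying this boundary result is where I expect the difficulty to concentrate, since there is no serviceable analogue of the one-variable punctured-neighborhood theorem for the Taylor spectrum in $\mathbb C^2$, and it is precisely this ingredient that fails once $d>2$.
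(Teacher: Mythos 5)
Your proof of the corollary proper---closing the cycle $\omega(\mathbf T)\subseteq\sigma(\mathbf T)\setminus\pi_{00}(\mathbf T)\subseteq\sigma_W(\mathbf T)\subseteq\omega(\mathbf T)$ via Theorem \ref{Weyl}(i), Theorem \ref{Weyl}(ii) and Lemma \ref{inclu}---is exactly the paper's argument, and your sketch of Theorem \ref{Weyl}(i), as well as of the ``eigenvalue of finite multiplicity'' half of (ii), also tracks the paper. (One presentational caveat in (i): the paper never needs to \emph{assume} a joint Stampfli theorem; it restricts $\mathbf T$ to the Shilov spectral subspace, uses that restrictions of hyponormal operators to invariant subspaces are hyponormal together with the projection property to identify that subspace with the joint eigenspace, and only then obtains reducibility from Lemma \ref{Lem}, so the self-adjointness of the idempotent is a conclusion, not an input.)

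The genuine gap is the isolation step in (ii). The ``strictly two-variable fact'' you invoke---that a Taylor--Fredholm point of index $0$ lying on $\partial\sigma(\mathbf T)$ must be isolated in $\sigma(\mathbf T)$---is false, and the paper's own example (the remark following Corollary \ref{Weyl-0}) refutes it: for $\mathbf T=(U_+,0)$ one has $\sigma(\mathbf T)=\bar{\mathbb D}\times\{0\}$ and $\sigma_W(\mathbf T)=\sigma_e(\mathbf T)=\mathbb T\times\{0\}$, so every $(\lambda,0)$ with $|\lambda|<1$ is Fredholm of index $0$ (explicitly, the Koszul complex of $(U_+-\lambda,0)$ has $H^0=0$ and $H^1\cong H^2\cong\operatorname{coker}(U_+-\lambda)$), lies on $\partial\sigma(\mathbf T)$ since the spectrum has empty interior in $\mathbb C^2$, yet is not isolated in $\sigma(\mathbf T)$. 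Both coordinates here are hyponormal; the only hypothesis this example violates is \eqref{qt}, which your boundary fact never uses, so the fact is false even for hyponormal pairs. The correct two-variable ingredient, Lemma \ref{Cor610} (\cite[Corollary 6.10]{Cu}), requires $\boldsymbol\lambda$ to be an isolated point of the \emph{Harte} spectrum $\sigma_H(\mathbf T)$, a much stronger condition than lying on the topological boundary of $\sigma(\mathbf T)$, and verifying it is exactly where the paper spends its effort and where both hypotheses enter: by Lemma \ref{Lem}, $\mathbf T=\boldsymbol\lambda I\oplus\mathbf B$ with $\ker Q_{\mathbf B_{\boldsymbol\lambda}}(I)=\{0\}$; the equality \eqref{qtp} (hyponormality plus \eqref{qt}) then forces $\ker Q_{\mathbf B^*_{\boldsymbol\lambda}}(I)=\{0\}$; both operators are positive and Fredholm, hence invertible, so $\boldsymbol\lambda\notin\sigma_H(\mathbf B)$ and $\boldsymbol\lambda$ is isolated in $\sigma_H(\mathbf T)=\{\boldsymbol\lambda\}\cup\sigma_H(\mathbf B)$. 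Your separability observation (pairwise orthogonal joint eigenspaces make the joint point spectrum countable) is correct and does rule out interior points, but it cannot exclude accumulation along the boundary, which is precisely where the difficulty lives; without a valid substitute for the Harte-spectrum step, your proof of (ii)---and hence of the corollary---does not close.
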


\begin{proof}
By Theorem \ref{Weyl}(i), 
$$
\omega(\mathbf{T}) \subseteq \sigma(\mathbf{T}) \setminus \pi_{00}(\mathbf{T}),
$$
and by Theorem \ref{Weyl}(ii),
$$
\sigma(\mathbf{T}) \setminus \pi_{00}(\mathbf{T}) \subseteq \sigma_W(\mathbf{T}).
$$
Since $\sigma_W(\mathbf{T}) \subseteq \omega(\mathbf{T})$ is always true (by Lemma \ref{inclu}), (\ref{Cor15}) follows. 
\end{proof}

\begin{remark} \ (i) One cannot relax the condition \eqref{qt}. \ Indeed, let $\mathbf{T}=(U_+, 0)$, where $U_+$ denotes the unilateral shift on $\ell^2(\mathbb Z_+)$ (see the discussion following \cite[Definition 5]{H-K}). \ Indeed, for this commuting pair, $\sigma(\mathbf{T})=\bar{\mathbb D} \times {0}$, $\sigma_e(\mathbf{T})=\mathbb{T} \times {0}$, $\pi_{00}(\mathbf{T})=\emptyset$, $\omega(\mathbf{T})=\sigma(\mathbf{T})$ and $\sigma_W(\mathbf{T})=\sigma_e(\mathbf{T})$. \ \newline
(ii) \ Note further that the result above is not best possible. \ Indeed, it may happen that the conclusion of Weyl's Theorem holds but \eqref{qt} is violated. \ For instance, let $\mathbf{T}$ be the Drury-Arveson $2$-variable weighted shift; then $T_1$ and $T_2$ are hyponormal operators such that $\pi_{00}(\mathbf{T})=\emptyset$ and $\omega(\mathbf{T}) = \sigma(\mathbf{T}) = \sigma_W(\mathbf{T})=\overline{\mathbb B}$  \cite{Ar3}. \ However, since $I - Q_\mathbf{T}(I) \leq 0$ and $I-Q_{\mathbf{T}^*}(I)$ is the orthogonal projection onto the scalars, 
$$
0=\dim \ker Q_{\mathbf{T}}(I) \ngeq  \dim \ker Q_{\mathbf{T}^*_{\boldsymbol\lambda}}(I)=1 \; \; \textrm{ for every } \boldsymbol\lambda \in \sigma(\mathbf{T}). \qed
$$
\end{remark}

The first part of Theorem \ref{Weyl} generalizes \cite[Theorem 4]{Cho-1}, while Corollary \ref{Weyl-0} generalizes \cite[Theorem 6]{H-K} when $d=2$ (see also \cite[Theorem 2.5.4]{Le}). \ All these were obtained under the additional assumption that $\mathbf{T}$ is {\it doubly commuting}, that is, a commuting $d$-tuple $\mathbf{T}$ such that 
$$
T^*_iT_j=T_jT^*_i~\mbox{for~} 1 \leq i \neq j \leq d.
$$
Although  the conclusion of  \cite[Theorem 6]{H-K} is stronger than that of Theorem \ref{Weyl}, our result does not assume double commutativity. \ On the other hand, our method of proof relies on a (consequence of a) strictly $2$-dimensional result about the topological boundary of the Taylor spectrum \cite[Theorem 6.8]{Cu}, and hence does not extend to the case $d \geq 3.$

\section{Proof of Theorem \ref{Weyl}}

The proof of Theorem \ref{Weyl} presented below relies on a number of non-trivial results, including the Shilov Idempotent Theorem (\cite{Cu}, \cite{Tay2}). \ We also need a result from \cite{Cu-1} pertaining to
connections between Harte and Taylor spectra.
We begin with a decomposition of tuples of commuting hyponormal operators; we believe this result is known, although we have not been able to find a concrete reference in the literature. \ In what follows, recall that $\boldsymbol\lambda := (\lambda_1, \cdots, \lambda_d)$.  

\begin{lemma} \label{Lem} Let $\mathbf{T}$ be a $d$-tuple of commuting
hyponormal operators $T_1, \cdots, T_d$. \ Then for any $\boldsymbol\lambda \equiv (\lambda_1,
\cdots, \lambda_d) \in \sigma(\mathbf{T}),$ $\mathcal M_1:=\ker
Q_{\mathbf{T}_{\boldsymbol\lambda}}(I)$ is a reducing subspace for $\mathbf{T}$, where
$\mathbf{T}_{\boldsymbol\lambda}$ denotes the $d$-tuple $\mathbf{T} - \boldsymbol\lambda I=(T_1-\lambda_1 I,
\cdots, T_d - \lambda_d I)$, and $Q_{\mathbf{T}_{\boldsymbol\lambda}}(I):=\sum_{i=1}^d
(T_i-\lambda_i I)^*(T_i-\lambda_i I)$. \ Moreover, $\mathbf{T}$ decomposes into
$(\lambda_1 I_{\mathcal M_1},\cdots, \newline \lambda_d I_{\mathcal M_1}) \oplus \mathbf{B}$ on the orthogonal direct sum
$\mathcal H = \mathcal M_1 \oplus (\mathcal M_1)^{\perp},$ where
$I_{\mathcal M_1}$ is the identity operator on $\mathcal M_1,$ and
$\mathbf{B}$ is a $d$-tuple of commuting hyponormal operators such that $\ker
Q_{\mathbf{B}_{\boldsymbol\lambda}}(I_{\mathcal M^{\perp}_1})=\{0\}$.
\end{lemma}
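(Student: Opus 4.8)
The plan is to identify $\mathcal M_1$ with the joint eigenspace of $\mathbf{T}$ at $\boldsymbol\lambda$ and then to use hyponormality to show that each $T_i^*$ also acts as a scalar on this subspace, which is precisely what promotes invariance to reduction.

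First I would compute the quadratic form of the positive operator $Q_{\mathbf{T}_{\boldsymbol\lambda}}(I)$: for every $x \in \mathcal H$,
$$
\langle Q_{\mathbf{T}_{\boldsymbol\lambda}}(I) x, x\rangle = \sum_{i=1}^d \|(T_i-\lambda_i)x\|^2.
$$
Since this is a sum of nonnegative terms, $x$ lies in $\mathcal M_1 = \ker Q_{\mathbf{T}_{\boldsymbol\lambda}}(I)$ exactly when $(T_i-\lambda_i)x = 0$ for every $i$, so that
$$
\mathcal M_1 = \bigcap_{i=1}^d \ker(T_i-\lambda_i).
$$
In particular $T_i x = \lambda_i x$ for $x \in \mathcal M_1$, which already shows that $\mathcal M_1$ is invariant under each $T_i$ (a scalar multiple of $x$ stays in the subspace).

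The key step, and the only point where hyponormality is used, is to establish invariance under each $T_i^*$. Writing $S_i := T_i - \lambda_i$, I would first note that each $S_i$ is hyponormal, because the scalar translation leaves the self-commutator unchanged: $S_i^*S_i - S_iS_i^* = T_i^*T_i - T_iT_i^* \ge 0$. Hyponormality gives $\|S_i^* x\| \le \|S_i x\|$ for all $x$, so for $x \in \mathcal M_1$ we have $S_i x = 0$ and hence $S_i^* x = 0$, that is, $T_i^* x = \overline{\lambda_i}\, x$. Thus every $T_i^*$ also acts as a scalar on $\mathcal M_1$, which is therefore reducing for $\mathbf{T}$. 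This yields the orthogonal decomposition $\mathbf{T} = (\lambda_1 I_{\mathcal M_1}, \cdots, \lambda_d I_{\mathcal M_1}) \oplus \mathbf{B}$ with $\mathbf{B} := \mathbf{T}|_{\mathcal M_1^\perp}$.

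It then remains to check that $\mathbf{B}$ has the stated properties. Commutativity is inherited from $\mathbf{T}$, and since $\mathcal M_1$ reduces each $T_i$ the self-commutator splits as a direct sum, forcing each $B_i = T_i|_{\mathcal M_1^\perp}$ to be hyponormal. Finally, if $y \in \mathcal M_1^\perp$ satisfies $Q_{\mathbf{B}_{\boldsymbol\lambda}}(I_{\mathcal M_1^\perp}) y = 0$, the same quadratic-form computation gives $(B_i-\lambda_i)y = (T_i-\lambda_i)y = 0$ for all $i$, so $y \in \mathcal M_1 \cap \mathcal M_1^\perp = \{0\}$, and therefore $\ker Q_{\mathbf{B}_{\boldsymbol\lambda}}(I_{\mathcal M_1^\perp}) = \{0\}$. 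I do not anticipate a serious obstacle: the one implication that must be stated with care is $S_i x = 0 \Rightarrow S_i^* x = 0$, which is false for general operators and holds here only by hyponormality — this is exactly the ingredient that makes $\mathcal M_1$ reducing rather than merely invariant.
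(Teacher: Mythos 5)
Your proof is correct and follows essentially the same route as the paper's: identify $\mathcal M_1$ with the joint eigenspace $\bigcap_{i=1}^d \ker(T_i-\lambda_i)$, then use the hyponormality fact that $T_ix=\lambda_i x$ forces $T_i^*x=\bar{\lambda}_i x$ to upgrade invariance to reduction, after which the properties of $\mathbf{B}$ are immediate. The only difference is one of detail: the paper cites the eigenvector fact outright, while you supply its standard proof (translation invariance of the self-commutator plus $\|S_i^*x\|\le\|S_ix\|$) and spell out the verification that $\ker Q_{\mathbf{B}_{\boldsymbol\lambda}}(I_{\mathcal M_1^\perp})=\{0\}$.
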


\begin{proof} Note that $\mathcal M_1=\ker Q_{\mathbf{T}_{\boldsymbol\lambda}}(I) = \cap_{i=1}^d \ker (T_i - \lambda_i I).$
Clearly, $T_i (\mathcal M_1) \subseteq \mathcal M_1$ for any
$i=1, \cdots, d$. \ Since $T_i$ is hyponormal, $T_ix=\lambda_i x$
implies $T^*_ix = \bar{\lambda}_ix,$ and hence $T^*_i (\mathcal M_1)
\subseteq \mathcal M_1$ for any $i=1, \cdots, d$. \ Since $\mathbf{T}$ is an
extension of $\mathbf{B}$, it follows immediately that $\mathbf{B}$ has the desired
properties.
\end{proof}

To state the next result, we recall that a pair $\mathbf{S}$ of commuting operators is said to be semi-Fredholm if all boundary maps in the Koszul complex $K(\mathbf{S},\mathcal{H})$ have closed range, and either $H^0(\mathbf{S})$ and $H^2(\mathbf{S})$ are finite dimensional or $H^1(\mathbf{S})$ is finite dimensional.
   
\begin{lemma} \label{Cor610}
\ (cf. \cite[Corollary 3.6]{Cu-1} and \cite[Corollary 6.10]{Cu}) \ Let $\mathbf{T}$ be a pair of commuting operators, and let $\boldsymbol\lambda$ be an isolated point of $\sigma_H(\mathbf{T})$. \ Assume that $\boldsymbol\lambda$ is in the semi-Fredholm domain of $\mathbf{T}$. \ Then $\boldsymbol\lambda$ is an isolated point of $\sigma(\mathbf{T})$ if and only if $\mbox{ind} (\mathbf{T}-\boldsymbol\lambda) = 0$. \  
\end{lemma}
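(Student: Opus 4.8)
The plan is to carry out the whole argument inside the Koszul complex of $\mathbf{T}-\boldsymbol\mu$ and to exploit the feature peculiar to $d=2$, namely that there is exactly one intermediate cohomology group. Recall from \eqref{index} that $\mbox{ind}(\mathbf{T}-\boldsymbol\mu)=\dim H^0(\mathbf{T}-\boldsymbol\mu)-\dim H^1(\mathbf{T}-\boldsymbol\mu)+\dim H^2(\mathbf{T}-\boldsymbol\mu)$. Whenever $\boldsymbol\mu\notin\sigma_H(\mathbf{T})$ the tuple $\mathbf{T}-\boldsymbol\mu$ is jointly left and right invertible, so, through the identifications $H^0(\mathbf{T}-\boldsymbol\mu)\cong\ker Q_{\mathbf{T}_{\boldsymbol\mu}}(I)$ and $H^2(\mathbf{T}-\boldsymbol\mu)\cong\ker Q_{\mathbf{T}^*_{\boldsymbol\mu}}(I)$ recorded above, one has $H^0(\mathbf{T}-\boldsymbol\mu)=H^2(\mathbf{T}-\boldsymbol\mu)=\{0\}$.

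First I would fix an open ball $B$ centered at $\boldsymbol\lambda$ that lies inside the (open) semi-Fredholm domain of $\mathbf{T}$ and whose puncture $B^*:=B\setminus\{\boldsymbol\lambda\}$ misses $\sigma_H(\mathbf{T})$; this is possible precisely because $\boldsymbol\lambda$ is an isolated point of $\sigma_H(\mathbf{T})$ lying in the open semi-Fredholm domain. For $\boldsymbol\mu\in B^*$ the index formula collapses to $\mbox{ind}(\mathbf{T}-\boldsymbol\mu)=-\dim H^1(\mathbf{T}-\boldsymbol\mu)$. This is the single point at which $d=2$ is indispensable: with only one middle group, vanishing of the index is equivalent to $H^1(\mathbf{T}-\boldsymbol\mu)=\{0\}$ and hence, together with $H^0=H^2=\{0\}$, to Taylor invertibility of $\mathbf{T}-\boldsymbol\mu$, whereas for $d\ge 3$ the alternating sum can vanish while the intermediate groups persist.

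Next I would invoke the stability (local constancy) of the semi-Fredholm index on connected subsets of the semi-Fredholm domain. Since $B$ is connected and contained in that domain, $\mbox{ind}(\mathbf{T}-\boldsymbol\mu)$ is constant on $B$; in particular $\dim H^1(\mathbf{T}-\boldsymbol\mu)$ equals a fixed value $c\in\{0,1,2,\dots\}\cup\{\infty\}$ for all $\boldsymbol\mu\in B^*$, and $\mbox{ind}(\mathbf{T}-\boldsymbol\lambda)=-c$. The claimed equivalence is then a short chain: since $\boldsymbol\lambda\in\sigma_H(\mathbf{T})\subseteq\sigma(\mathbf{T})$, the point $\boldsymbol\lambda$ is isolated in $\sigma(\mathbf{T})$ if and only if $\mathbf{T}-\boldsymbol\mu$ is Taylor invertible throughout some $B^*$, which by the previous paragraph happens if and only if $H^1(\mathbf{T}-\boldsymbol\mu)=\{0\}$ there, i.e.\ $c=0$, i.e.\ $\mbox{ind}(\mathbf{T}-\boldsymbol\lambda)=0$.

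The step I expect to demand the most care is the index bookkeeping when $H^1$ may be infinite dimensional, for the semi-Fredholm index is a priori valued in $\mathbb{Z}\cup\{\pm\infty\}$. I would therefore use the stability theorem in its extended-integer form and note that on $B^*$ the common value $-\dim H^1(\mathbf{T}-\boldsymbol\mu)$ is a well-defined non-positive (extended) integer, so that the single scalar condition $\mbox{ind}(\mathbf{T}-\boldsymbol\lambda)=0$ exactly separates the Taylor-invertible (isolated) case from the non-isolated one. This collapse of the index to $-\dim H^1$ over the Harte resolvent set is the two-variable phenomenon underlying \cite[Theorem 6.8]{Cu}, and the resulting equivalence is the content of \cite[Corollary 6.10]{Cu} and \cite[Corollary 3.6]{Cu-1}.
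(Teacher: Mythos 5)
The paper does not actually prove Lemma \ref{Cor610}: it is quoted from \cite[Corollary 3.6]{Cu-1} and \cite[Corollary 6.10]{Cu}, so there is no internal proof to compare against. Your argument is, in substance, a correct reconstruction of how the cited result is obtained, and you locate the two-variable phenomenon in exactly the right place: off the Harte spectrum the index collapses to $\mbox{ind}(\mathbf{T}-\boldsymbol\mu)=-\dim H^1(\mathbf{T}-\boldsymbol\mu)$, so vanishing index forces Taylor invertibility, which is what fails for $d\ge 3$. Two points need tightening before this counts as a complete proof. First, the inclusion $\sigma_H(\mathbf{T})\subseteq\sigma(\mathbf{T})$, which you use to know that $\boldsymbol\lambda$ really lies in $\sigma(\mathbf{T})$, is not formal: it holds because on a Hilbert space an exact Koszul complex has closed ranges and therefore splits, so Taylor invertibility implies joint left and right invertibility; this deserves a line, since on general Banach spaces it is the split spectrum, not $\sigma(\mathbf{T})$, that automatically contains $\sigma_H(\mathbf{T})$. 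Second, and more seriously, the engine of your proof --- openness of the semi-Fredholm domain and local constancy of the extended-integer index for commuting pairs --- is not Kato's single-operator stability theory but the stability theorem for semi-Fredholm \emph{complexes}; it is true and available (it is part of the material in \cite[Section 6]{Cu}, going back to perturbation theory for Banach space complexes), but invoked without a precise reference it is an input of essentially the same depth as the lemma you are proving, so as written your argument mainly trades one citation for another. A simplification worth noting: for a pair, every point off $\sigma_H(\mathbf{T})$ is automatically semi-Fredholm (joint left invertibility makes the first boundary map bounded below, joint right invertibility makes the last one surjective, and then $H^0=H^2=\{0\}$), so your punctured ball $B^*$ lies in the semi-Fredholm domain for free; only the constancy of the index across the puncture genuinely requires the stability theorem.
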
 

\begin{proof}[Proof of Theorem \ref{Weyl}]
(i) \ To see the inclusion $ \omega(\mathbf{T}) \subseteq \sigma(\mathbf{T}) \setminus
\pi_{00}(\mathbf{T})$, it suffices to check that any isolated eigenvalue of
$\mathbf{T}$ of finite multiplicity does not belong to the Taylor spectrum of
some finite rank perturbation of $\mathbf{T}$. \ To see that, let
$\boldsymbol\lambda=(\lambda_1, \cdots, \lambda_d)$ be an isolated point of the
Taylor spectrum of $\mathbf{T}=(T_1, \cdots, T_d).$
Let $K_1:=\{\boldsymbol\lambda\}$ and let $K_2:=\sigma(\mathbf{T}) \setminus K_1$. \ By the
Shilov Idempotent Theorem \cite[Application 5.24]{Cu}, there exist
invariant subspaces $\mathcal M_1, \mathcal M_2$ of $\mathbf{T}$ such that
$\mathcal H = \mathcal M_1 \dotplus \mathcal M_2$ (Banach direct sum) and
$\sigma(\mathbf{T}|_{M_i}) =K_i$ for $i=1, 2$. \ By the Spectral Mapping Property
for the Taylor spectrum \cite[Corollary 3.5]{Cu}, $\sigma(\mathbf{T}|_{\mathcal{M}_1} - \boldsymbol\lambda
I_{\mathcal M_1})=\{0\}$,  where $I_{\mathcal M_1}$ is the identity
operator on $\mathcal M_1$. \ \ Since $\mathbf{T}|_{\mathcal{M}_1} - \boldsymbol\lambda I_{\mathcal
M_1}$ is a commuting $d$-tuple of hyponormal operators, it follows from the
Projection Property for the Taylor spectrum \cite[Theorem 4.9]{Cu} that
$T_{i}|_{\mathcal{M}_1} = \lambda_i I_{\mathcal M_1}$ for $i=1, \cdots, d$. \ In
particular, $\mathcal M_1 \subseteq \cap_{i=1}^d \ker (T_i -
\lambda_i I_{\mathcal M_1})$. \ Since $\boldsymbol\lambda \notin K_2,$ we must
have $\mathcal M_1=\cap_{i=1}^d \ker (T_i - \lambda_i I_{\mathcal
M_1})$. \ 
By the preceding lemma, $\mathbf{T}$ decomposes into $\boldsymbol\lambda I \oplus \mathbf{B}$ on the orthogonal direct sum $\mathcal H = \mathcal M_1 \oplus \mathcal M^{\perp}_1,$ where $\mathbf{B}$ is a commuting $d$-tuple of  hyponormal operators obtained by restricting $\mathbf{T}$ to
$\mathcal M^{\perp}_1$. \ 

Suppose now that $\boldsymbol \lambda \in \sigma(\mathbf{B})$. \ Since $\sigma(\mathbf{T}) =
\{\boldsymbol\lambda\} \cup \sigma(\mathbf{B})$ \cite[Page 39]{Cu}, $\boldsymbol\lambda$ is an
isolated point of $\sigma(\mathbf{B})$. \ Since $\mathbf{B}=(B_1, \cdots, B_d)$ is a
$d$-tuple of hyponormal operators, by the argument of the preceding
paragraph, isolated points of $\sigma(\mathbf{B})$ must be eigenvalues of
$\mathbf{B}$, and hence there exists $0 \ne y \in \mathcal{M}_1^{\perp}$ such that $y \in \cap_{i=1}^d \ker (B_i - \lambda_i I_{\mathcal M_1^{\perp}})$. \ It follows that $y \in \cap_{i=1}^d \ker (T_i - \lambda_i I_{\mathcal{H}})=\mathcal M_1$, which is a contradiction. \ Thus, $\boldsymbol\lambda \notin \sigma(\mathbf{B})$. \ 

We have proved that if $\boldsymbol\lambda$ is an isolated eigenvalue of $\mathbf{T}$ having finite multiplicity then $\mathbf{B}=\mathbf{T} - \boldsymbol\lambda
I_{\mathcal M_1}$ is a finite-rank perturbation of $\mathbf{T}$, and $\boldsymbol\lambda \notin \sigma(\mathbf{B})$, as desired.

(ii) Assume now that $d=2$ and that $\mathbf{T}$ has the
quasitriangular property \eqref{qt}. \ Since each $T_i$ is
hyponormal, \beq \label{qtp} \dim \ker Q_{\mathbf{T}_{\boldsymbol\lambda}}(I) =  \dim
\ker Q_{\mathbf{T}^*_{\boldsymbol\lambda}}(I)~\mbox{for~ every~} \boldsymbol\lambda \in
\sigma(\mathbf{T}).\eeq Let $\boldsymbol\lambda \in \sigma(\mathbf{T})$ be such that $\mathbf{T}-\boldsymbol\lambda$
is Fredholm with Fredholm index equal to $0$. \ By the definition of
the Fredholm index (see \eqref{index}) and \eqref{qtp},
\beq
\label{dim-cohom} 2 \dim \ker Q_{\mathbf{T}_{\boldsymbol\lambda}}(I) = \dim
H^1(\mathbf{T}_{\boldsymbol\lambda}) = 2\dim \ker Q_{\mathbf{T}^*_{\boldsymbol\lambda}}(I), 
\eeq
where $H^1(\mathbf{S})$ is the middle cohomology group appearing in  the Koszul
complex of the commuting pair $\mathbf{S}$. \ Since $\boldsymbol\lambda \in \sigma(\mathbf{T}) \setminus
\sigma_e(\mathbf{T})$, by \eqref{dim-cohom} we must necessarily have $0 < \dim \ker
Q_{\mathbf{T}_{\boldsymbol\lambda}}(I) < \infty,$ and hence $\boldsymbol\lambda$ is an eigenvalue
of $\mathbf{T}$ of finite multiplicity. \ 

To see that $\boldsymbol\lambda$ is an isolated
point of $\sigma(\mathbf{T})$, in view of Lemma \ref{Cor610}, it
suffices to check that $\boldsymbol\lambda$ is an isolated point of the Harte
spectrum $\sigma_H(\mathbf{T})$ of $\mathbf{T}$. \ If $\mathbf{B}$ is as in Lemma \ref{Lem}, then
$\mathbf{B}-\boldsymbol\lambda I_{\mathcal M^{\perp}_1}$ is also Fredholm with index
equal to $0$. \ Since $\ker Q_{\mathbf{B}_{\boldsymbol\lambda}}(I_{\mathcal M^{\perp}_1})
=\{0\}$ (by Lemma \ref{Lem}), using \eqref{qtp} we must have $\ker
Q_{\mathbf{B}^*_{{\boldsymbol\lambda}}}(I_{\mathcal M^{\perp}_1}) =\{0\}$. \ 

On the other hand, $Q_{\mathbf{B}_{\boldsymbol\lambda}}(I_{\mathcal
M^{\perp}_1})$ and $Q_{\mathbf{B}^*_{\boldsymbol\lambda}}(I_{\mathcal M^{\perp}_1})$ are
Fredholm (by item (v) in the paragraph immediately following \cite[Remark 6.7]{Cu}). \ As a result, $Q_{\mathbf{B}_{\boldsymbol\lambda}}(I_{\mathcal M^{\perp}_1})$ and
$Q_{\mathbf{B}^*_{\boldsymbol\lambda}}(I_{\mathcal M^{\perp}_1})$ are invertible. \ It
follows that $\boldsymbol\lambda$ cannot be in the Harte spectrum
$\sigma_H(\mathbf{B})$. \ Since $\boldsymbol\lambda \in \sigma_H(\mathbf{T}) = \{\boldsymbol\lambda\} \cup
\sigma_H(\mathbf{B})$, $\boldsymbol\lambda$ must be an isolated point of $\sigma_H(\mathbf{T})$, as desired.
\end{proof}

\begin{remark} \label{rem1}
Assume that the commuting pair $\mathbf{T}$ satisfies \eqref{qt}. \ 
If $\mathbf{T}$ has no normal direct summand, then by Lemma \ref{Lem}, $\mathbf{T}$ has no eigenvalues, and hence the Weyl spectrum of $\mathbf{T}$ coincides with the Taylor spectrum. \qed
\end{remark}

\section{Some Consequences of Theorem \ref{Weyl}}

In this section we discuss a couple of interesting consequences of Theorem \ref{Weyl} (cf. \cite[Corollary 3.2]{C}). \ First, we recall the notion of jointly hyponormality for $d$-tuples. \ A $d$-tuple $\mathbf{T}=(T_1, \cdots, T_d)$ of bounded linear operators on ${\mathcal H}$ is said to be {\it jointly hyponormal}
if
the $d
\times d$ matrix $([T^*_j, T_i])_{1 \leq i, j \leq d}$ is positive
definite, where $[A, B]$ stands for the commutator $AB-BA$ of $A$
and $B$. \ 
 
\begin{corollary}  Let $\mathbf{T}=(T_1, T_2)$ be a jointly hyponormal commuting pair with the quasitriangular property \eqref{qt}.
If $\mathbf{T}$ has no isolated eigenvalues of finite multiplicity, then for any pair $\mathbf{K}=(K_1, K_2)$ of compact operators $K_1, K_2$ such that $\mathbf{T}+\mathbf{K}$ is commuting, we have
\beqn
\|T^*_1T_1 + T^*_2T_2\| \leq \|(T_1+K_1)^*(T_1 +K_1) + (T_2+K_2)^*(T_2 +K_2)\|.
\eeqn
\end{corollary}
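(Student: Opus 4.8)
The plan is to route the whole comparison through the outer spectral radius of the two tuples. For a commuting pair $\mathbf{S}$ write $r(\mathbf{S}):=\sup\{\,|\boldsymbol\lambda|:\boldsymbol\lambda\in\sigma(\mathbf{S})\,\}$, where $|\boldsymbol\lambda|^2:=|\lambda_1|^2+|\lambda_2|^2$, so that $r(\mathbf{S})^2=\sup\{\,|\boldsymbol\lambda|^2:\boldsymbol\lambda\in\sigma(\mathbf{S})\,\}$. I would establish three facts and then chain them: (A) the universal bound $r(\mathbf{S})^2\le\|Q_{\mathbf{S}}(I)\|$, valid for \emph{every} commuting pair $\mathbf{S}$; (B) the reverse, \emph{normaloid}, identity $\|Q_{\mathbf{T}}(I)\|=r(\mathbf{T})^2$, valid for a jointly hyponormal $\mathbf{T}$; and (C) the spectral inclusion $\sigma(\mathbf{T})\subseteq\sigma(\mathbf{T}+\mathbf{K})$. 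Granting these, the corollary follows at once from
\[
\|Q_{\mathbf{T}}(I)\|\overset{(B)}{=}r(\mathbf{T})^2\overset{(C)}{\le}r(\mathbf{T}+\mathbf{K})^2\overset{(A)}{\le}\|Q_{\mathbf{T}+\mathbf{K}}(I)\|,
\]
since the extreme members are exactly $\|T_1^*T_1+T_2^*T_2\|$ and $\|(T_1+K_1)^*(T_1+K_1)+(T_2+K_2)^*(T_2+K_2)\|$.

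For (A) I would fix $\boldsymbol\mu\in\sigma(\mathbf{S})$, assume $\boldsymbol\mu\neq0$, set the unit vector $c:=\overline{\boldsymbol\mu}/|\boldsymbol\mu|\in\mathbb{C}^2$, and apply the Spectral Mapping Property for the Taylor spectrum \cite[Corollary 3.5]{Cu} to the linear polynomial $z\mapsto c_1z_1+c_2z_2$; this gives $|\boldsymbol\mu|=c_1\mu_1+c_2\mu_2\in\sigma(c_1S_1+c_2S_2)$, whence $|\boldsymbol\mu|\le\|c_1S_1+c_2S_2\|$. A one-line Cauchy--Schwarz estimate, $\|c_1S_1x+c_2S_2x\|^2\le(|c_1|^2+|c_2|^2)\langle Q_{\mathbf{S}}(I)x,x\rangle$, then yields $|\boldsymbol\mu|^2\le\|Q_{\mathbf{S}}(I)\|$, and (A) follows by taking the supremum over $\sigma(\mathbf{S})$. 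For (C), joint hyponormality forces each diagonal commutator $[T_i^*,T_i]$ to be positive, so $\mathbf{T}$ is a commuting pair of hyponormal operators; since it also has the quasitriangular property and $\pi_{00}(\mathbf{T})=\emptyset$, Corollary \ref{Weyl-0} gives $\omega(\mathbf{T})=\sigma(\mathbf{T})\setminus\pi_{00}(\mathbf{T})=\sigma(\mathbf{T})$. As $\omega(\mathbf{T})$ is by definition contained in $\sigma(\mathbf{T}+\mathbf{K})$ for every commuting compact perturbation, (C) holds, and in particular $r(\mathbf{T})\le r(\mathbf{T}+\mathbf{K})$.

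The substance of the argument is (B). First, joint hyponormality makes every combination $S_c:=c_1T_1+c_2T_2$ hyponormal: $\langle[S_c^*,S_c]x,x\rangle=\sum_{i,j}\overline{c_i}c_j\langle[T_i^*,T_j]x,x\rangle$ is exactly the quadratic form of the positive matrix $([T_j^*,T_i])$ evaluated at $(\,\overline{c_j}x\,)_j$. Hence $\|S_c\|=r(S_c)$, which with the spectral mapping identity already reproves the inequality $r(\mathbf{T})^2\le\|Q_{\mathbf{T}}(I)\|$ of (A); the real task is the reverse. For this I would work with the completely positive map $\Phi:=Q_{\mathbf{T}}$, $\Phi(X)=T_1^*XT_1+T_2^*XT_2$, and the key operator inequality $\Phi^2(I)\ge\Phi(I)^2$. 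This inequality \emph{is} joint hyponormality in disguise: using commutativity one computes $\Phi^2(I)-\Phi(I)^2=\sum_{i,j}T_i^*[T_j^*,T_i]T_j$, and $\langle(\Phi^2(I)-\Phi(I)^2)x,x\rangle=\sum_{i,j}\langle[T_j^*,T_i]\,T_jx,\,T_ix\rangle\ge0$ is precisely the positivity of $([T_j^*,T_i])$ evaluated at $(\,T_jx\,)_j$. I would then feed $\Phi^2(I)\ge\Phi(I)^2$ into the $2\times2$ Kadison--Schwarz inequality for the completely positive map $\Phi$ to propagate it into the log-convexity of $n\mapsto\langle\Phi^n(I)x,x\rangle$, yielding $\|\Phi^n(I)\|=\|\Phi(I)\|^n$ for all $n$; since the well-known joint spectral radius formula reads $r(\mathbf{T})^2=\lim_n\|\Phi^n(I)\|^{1/n}$, the identity (B) would follow.

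The hard part will be completing (B), namely passing from the single clean inequality $\Phi^2(I)\ge\Phi(I)^2$ (which gives $\|\Phi^2(I)\|=\|\Phi(I)\|^2$ outright) to the full power identity $\|\Phi^n(I)\|=\|\Phi(I)\|^n$. The delicate point is that this iteration cannot be carried out by replacing $\mathbf{T}$ with a product tuple $(T^\alpha)_{|\alpha|=m}$, because joint hyponormality—already in one variable, where it is plain hyponormality—is not inherited by powers or products; the log-convexity must instead be extracted \emph{directly at the level of the completely positive map} $\Phi$, exploiting its complete positivity through the Kadison--Schwarz inequalities. This is exactly the two-variable shadow of the classical chain ``hyponormal $\Rightarrow$ paranormal $\Rightarrow$ normaloid'' that underlies Coburn's single-variable corollary \cite[Corollary 3.2]{C}, and it is where the two-variable argument must do its genuine work.
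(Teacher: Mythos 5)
Your proposal is, at its skeleton, exactly the paper's proof: both arguments run the chain
\begin{equation*}
\|T_1^*T_1+T_2^*T_2\| \;=\; r(\mathbf{T})^2 \;\le\; r(\mathbf{T}+\mathbf{K})^2 \;\le\; \|(T_1+K_1)^*(T_1+K_1)+(T_2+K_2)^*(T_2+K_2)\|,
\end{equation*}
with the middle inequality coming from the spectral inclusion $\sigma(\mathbf{T})\subseteq\sigma(\mathbf{T}+\mathbf{K})$, which you obtain (as the paper does) from Theorem \ref{Weyl} together with $\pi_{00}(\mathbf{T})=\emptyset$ and the definition of $\omega(\mathbf{T})$. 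The difference is that the paper outsources the two outer steps: your fact (A) is quoted as \cite[Lemma 2.1]{Ch-4}, and your fact (B) --- the normaloid identity for jointly hyponormal pairs --- is quoted as \cite[Lemma 3.10]{CS}, whereas you attempt self-contained proofs. Your proof of (A) is complete and correct, and your (C) is verbatim the paper's use of its own results. Your sketch of (B) is also on the right track: the identity $\Phi^2(I)-\Phi(I)^2=\sum_{i,j}T_i^*[T_j^*,T_i]T_j\ge 0$ is a correct use of commutativity plus joint hyponormality, and the log-convexity step you leave open does close. Indeed, since
\begin{equation*}
\begin{pmatrix} I & \Phi(I)\\ \Phi(I) & \Phi(I)^2\end{pmatrix}
=\begin{pmatrix} I \\ \Phi(I)\end{pmatrix}\begin{pmatrix} I & \Phi(I)\end{pmatrix}\ge 0
\end{equation*}
and $\Phi^{n-1}$ is $2$-positive, the matrix with entries $\Phi^{n-1}(I)$, $\Phi^{n}(I)$, $\Phi^{n}(I)$, $\Phi^{n-1}(\Phi(I)^2)$ is positive, so $\langle\Phi^{n}(I)x,x\rangle^2\le\langle\Phi^{n-1}(I)x,x\rangle\,\langle\Phi^{n-1}(\Phi(I)^2)x,x\rangle$; combining this with $\Phi^{n-1}(\Phi(I)^2)\le\Phi^{n-1}(\Phi^{2}(I))=\Phi^{n+1}(I)$ (positive maps preserve order) gives $a_n^2\le a_{n-1}a_{n+1}$ for $a_n:=\langle\Phi^n(I)x,x\rangle$, hence $a_n\ge a_1^n$ for unit $x$ and $\|\Phi^n(I)\|\ge\|\Phi(I)\|^n$, while $\|\Phi^n(I)\|=\|\Phi^n\|\le\|\Phi\|^n=\|\Phi(I)\|^n$ holds for any completely positive map. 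The one genuine caveat is your appeal to the ``well-known'' formula $r(\mathbf{T})^2=\lim_n\|\Phi^n(I)\|^{1/n}$: since $\Phi^n(I)=\sum_{|\alpha|=n}\frac{n!}{\alpha!}T^{*\alpha}T^{\alpha}$, this is the M\"uller--So{\l}tysiak joint spectral radius theorem, a nontrivial result not among this paper's references, and it is precisely the hard content hidden inside the citations you are replacing. So what your route buys is a self-contained derivation of the two quoted lemmas; what it costs is that (B), as written, is a plan rather than a proof, and even after the completion above it still rests on an external theorem that must be cited.
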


\begin{proof} Since $\mathbf{T}$ has no isolated eigenvalues of finite multiplicity, Theorem \ref{Weyl} implies that 
$$
\sigma(\mathbf{T}) \subseteq \sigma(\mathbf{T}+\mathbf{K})
$$
for any pair $\mathbf{K}$ of compact operators such that $\mathbf{T}+\mathbf{K}$ is
commuting. \ Now apply \cite[Lemma 3.10]{CS} and \cite[Lemma 2.1]{Ch-4} to conclude
that 
\beqn \|T^*_1T_1 + T^*_2T_2\| &=& r(\mathbf{T})^2 \leq r(\mathbf{T}+\mathbf{K})^2
\\ &\leq & \|(T_1+K_1)^*(T_1 +K_1) + (T_2+K_2)^*(T_2 +K_2)\|,
\eeqn
where 
$$
r(\mathbf{S}):=\sup \big\{\sqrt{|z_1|^2 + \cdots + |z_d|^2} : (z_1,
\cdots, z_d) \in \sigma(S)\big\}
$$
denotes the geometric spectral radius
of the $d$-tuple $\mathbf{S}$ of bounded linear operators on $\mathcal H$. \ 
\end{proof}

We do not know whether the conclusion of the last corollary holds for commuting pairs of hyponormal operators satisfying the quasitriangular property. \ (Recall that there exist commuting pairs of subnormal operators which are not jointly hyponormal; the Drury-Arveson $2$-variable weighted shift is such an example.)

Next, we obtain an analog of the Riesz-Schauder Theorem for commuting pairs of hyponormal operators (cf. \cite[Corollary 2.5.6]{Le}). \ Recall that a commuting $d$-tuple is {\it Browder invertible} if $\mathbf{T}$ is Fredholm such that there exists a deleted neighborhood of $\mathbf{0}$ disjoint from the Taylor spectrum of $\mathbf{T}$. \ The {\it Browder spectrum} $\sigma_b(\mathbf{T})$ of $\mathbf{T}$ is the collection of those $\boldsymbol\lambda \in \mathbb C^d$ for which $\mathbf{T}-\boldsymbol\lambda$ is not Browder invertible. \ It is not hard to see that $\sigma_b(\mathbf{T})$ is the union of $\sigma_e(\mathbf{T})$ and the accumulation points of the Taylor spectrum $\sigma(\mathbf{T})$ (cf. \cite{CuD}). (For basic facts about the Browder spectrum in the case $d=1$, the reader is referred to \cite{BDW}.) 

\begin{corollary}
Let $\mathbf{T}=(T_1, T_2)$ be a jointly hyponormal $2$-tuple with the quasitriangular property \eqref{qt}. \ Then $\sigma_b(\mathbf{T})=\omega(\mathbf{T}).$
\end{corollary}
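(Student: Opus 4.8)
The plan is to establish the double inclusion $\sigma_b(\mathbf{T}) \subseteq \omega(\mathbf{T})$ and $\omega(\mathbf{T}) \subseteq \sigma_b(\mathbf{T})$, exploiting the description of $\sigma_b(\mathbf{T})$ as the union of the Taylor essential spectrum $\sigma_e(\mathbf{T})$ with the accumulation points of $\sigma(\mathbf{T})$, together with the Weyl-type identity furnished by Corollary \ref{Weyl-0}. First I would observe that a jointly hyponormal pair is in particular a commuting pair of hyponormal operators (each $T_i$ is hyponormal, since the diagonal entries $[T_i^*,T_i]$ of the positive matrix $([T_j^*,T_i])$ are positive), so Corollary \ref{Weyl-0} applies and yields
\beq \label{wident}
\omega(\mathbf{T}) = \sigma(\mathbf{T}) \setminus \pi_{00}(\mathbf{T}).
\eeq
Thus the task reduces to verifying that $\sigma(\mathbf{T}) \setminus \pi_{00}(\mathbf{T}) = \sigma_b(\mathbf{T})$ under the standing hypotheses.

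For the inclusion $\sigma_b(\mathbf{T}) \subseteq \sigma(\mathbf{T}) \setminus \pi_{00}(\mathbf{T})$, I would argue contrapositively: a point $\boldsymbol\lambda \in \pi_{00}(\mathbf{T})$ is an isolated point of $\sigma(\mathbf{T})$ (hence not an accumulation point) and, by the finite-multiplicity analysis carried out in the proof of Theorem \ref{Weyl}(i), it lies in the semi-Fredholm domain with $\mathbf{T}-\boldsymbol\lambda$ actually Fredholm; since $\boldsymbol\lambda$ is isolated in $\sigma(\mathbf{T})$, it is isolated in $\sigma_H(\mathbf{T})$, so by Lemma \ref{Cor610} the index vanishes and $\boldsymbol\lambda$ is Browder invertible after translation, i.e. $\boldsymbol\lambda \notin \sigma_b(\mathbf{T})$. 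Conversely, for $\omega(\mathbf{T}) = \sigma(\mathbf{T}) \setminus \pi_{00}(\mathbf{T}) \subseteq \sigma_b(\mathbf{T})$, I would take $\boldsymbol\lambda \in \sigma(\mathbf{T})$ with $\boldsymbol\lambda \notin \sigma_b(\mathbf{T})$; then $\boldsymbol\lambda \notin \sigma_e(\mathbf{T})$ and $\boldsymbol\lambda$ is not an accumulation point, so it is an isolated point of $\sigma(\mathbf{T})$ lying in the Fredholm domain, and by the eigenvalue analysis for hyponormal tuples (isolated points of the Taylor spectrum of a hyponormal tuple are eigenvalues) of finite multiplicity, whence $\boldsymbol\lambda \in \pi_{00}(\mathbf{T})$, i.e. $\boldsymbol\lambda \notin \sigma(\mathbf{T}) \setminus \pi_{00}(\mathbf{T})$.

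Combining the two inclusions gives $\omega(\mathbf{T}) = \sigma(\mathbf{T}) \setminus \pi_{00}(\mathbf{T}) = \sigma_b(\mathbf{T})$, which is the assertion. The main obstacle I anticipate is the careful identification of isolated points of the Taylor spectrum with eigenvalues of finite multiplicity in the $2$-variable hyponormal setting: this is where the quasitriangular property \eqref{qt} and the strictly two-dimensional boundary result underlying Lemma \ref{Cor610} are essential, and it is precisely the step that forces the restriction to $d=2$. Everything else is a matter of unwinding the definition of the Browder spectrum as $\sigma_e(\mathbf{T})$ union the accumulation points of $\sigma(\mathbf{T})$, together with the fact that $\pi_{00}(\mathbf{T})$ consists exactly of the isolated, finite-multiplicity eigenvalues; the delicate point is ensuring that every isolated point of $\sigma(\mathbf{T})$ outside $\sigma_e(\mathbf{T})$ genuinely contributes a finite-multiplicity eigenvalue, so that the complement of $\pi_{00}(\mathbf{T})$ matches the accumulation-point-plus-essential-spectrum description of $\sigma_b(\mathbf{T})$ exactly.
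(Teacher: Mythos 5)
Your proof is correct, but it is organized differently from the paper's and does strictly more work. The paper handles the inclusion $\omega(\mathbf{T}) \subseteq \sigma_b(\mathbf{T})$ by citing a general fact, \cite[Lemma 2.5.3]{Le}, valid for arbitrary commuting tuples, and obtains the reverse inclusion in three lines by contraposition: if $\boldsymbol\lambda \notin \omega(\mathbf{T})$ then $\boldsymbol\lambda \notin \sigma_W(\mathbf{T})$ by Lemma \ref{inclu}, hence $\boldsymbol\lambda \notin \sigma_e(\mathbf{T})$, and Theorem \ref{Weyl}(ii) forces $\boldsymbol\lambda$ to be an isolated point of (or to lie outside) $\sigma(\mathbf{T})$, so that $\boldsymbol\lambda$ avoids $\sigma_b(\mathbf{T})=\sigma_e(\mathbf{T}) \cup \{\text{accumulation points of } \sigma(\mathbf{T})\}$. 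You instead reduce, via Corollary \ref{Weyl-0}, to the set identity $\sigma(\mathbf{T}) \setminus \pi_{00}(\mathbf{T}) = \sigma_b(\mathbf{T})$ and prove both inclusions by hand, re-running the Shilov idempotent/eigenvalue analysis from the proof of Theorem \ref{Weyl}(i). What this buys: your argument is self-contained modulo the paper's own results, with no appeal to the cited Lee lemma. What it costs: your inclusion $\sigma(\mathbf{T}) \setminus \pi_{00}(\mathbf{T}) \subseteq \sigma_b(\mathbf{T})$ leans on hyponormality (isolated Fredholm points of the Taylor spectrum are eigenvalues of finite multiplicity), whereas the paper's corresponding inclusion $\omega(\mathbf{T}) \subseteq \sigma_b(\mathbf{T})$ holds generically. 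Two small points. First, the detour through Lemma \ref{Cor610} to conclude that the index vanishes is superfluous: Browder invertibility, as defined in the paper, requires only Fredholmness together with a deleted neighborhood disjoint from the Taylor spectrum, and you already have both once you know $\boldsymbol\lambda \in \pi_{00}(\mathbf{T})$ is isolated and $\mathbf{T}-\boldsymbol\lambda$ is Fredholm. Second, your closing remark misplaces where \eqref{qt} and $d=2$ are actually needed: in your own architecture they enter only through the invocation of Corollary \ref{Weyl-0}, not through the identification of isolated Fredholm points with finite-multiplicity eigenvalues, which is valid for every commuting $d$-tuple of hyponormal operators by the proof of Theorem \ref{Weyl}(i).
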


\begin{proof}
The inclusion $\omega(\mathbf{T}) \subseteq \sigma_b(\mathbf{T})$ is always true \cite[Lemma 2.5.3]{Le}.
To see the reverse inclusion, let $\boldsymbol\lambda \notin \omega(\mathbf{T})$. \ By Lemma \ref{inclu},
$\boldsymbol\lambda \notin \sigma_W(\mathbf{T})$. \ In particular, $\boldsymbol\lambda \notin \sigma_e(\mathbf{T}).$
By Theorem \ref{Weyl}(ii), if $\boldsymbol\lambda \notin \sigma_W(\mathbf{T})$ then $\boldsymbol\lambda$ is an isolated point of $\sigma(\mathbf{T})$. \ Since $\sigma_b(\mathbf{T})$ is the union of $\sigma_e(\mathbf{T})$ and the accumulation points of $\sigma(\mathbf{T})$, it follows that $\boldsymbol\lambda \notin \sigma_b(\mathbf{T})$. \ The desired inclusion is now clear. 
\end{proof}

\end{document}